
\documentclass[12pt,onecolumn]{article}
\usepackage{amssymb}
\usepackage{amsfonts}
\usepackage{amsmath}

\setcounter{MaxMatrixCols}{10}

\newtheorem{theorem}{Theorem}

\newtheorem{example}{Example}

\newenvironment{proof}[1][Proof]{\noindent\textbf{#1.} }{\ \rule{0.5em}{0.5em}}
\input{tcilatex}
\begin{document}

\title{General proof for irrationality of infinite sums based on Fourier's
proof}
\author{Tomer Shushi \and {\small Department of Physics, Ben-Gurion
University of the Negev, Beersheba 8410501, Israel}}
\maketitle

\begin{abstract}
In this paper we review a general proof for the irrationality property of
numbers which take a certain form of infinite sums.

\textit{Keywords: Fourier's irrationality proof, Infinite sum numbers,
lrrational numbers}
\end{abstract}

\bigskip {\large Introduction:}

Proofs for irrationality are sometimes very difficult to obtain, for
instance the proofs of the irrationality of $\pi /e$ or $\pi +e$ as well as $%
\pi -e$ are open problems in mathematics. The proofs for the irrationality
of different numbers are sometimes based on different techniques and
approaches (see, for instance, [1,2,3,4]). The origins of the proof in this
paper seems to be unknown.  

\bigskip Define a number $P_{\chi }$ as the infinite sum%
\begin{equation*}
P_{\chi }=\overset{\infty }{\underset{n=0}{\sum }}\frac{\chi (n)}{n!}.
\end{equation*}%
For any $n=0,1,...,$ $\chi (n)$ is a positive integer, and $0\leq \chi
(n)\leq M\ $where $M$ is a known number (for instance $M=100$ so $0\leq \chi
(n)\leq 100\ \forall n=0,1,...$). Note that for finite $M,$ $P_{\chi }$ is a
finite number, since%
\begin{equation*}
\overset{\infty }{\underset{n=0}{\sum }}\frac{\chi (n)}{n!}\leq M\overset{%
\infty }{\underset{n=0}{\sum }}\frac{1}{n!}=M\cdot e.
\end{equation*}

\bigskip

\begin{theorem}
$P_{\chi }$\ is irrational number if the assumption that this number is
rational implies that its denominator should be greater than $M.$
\end{theorem}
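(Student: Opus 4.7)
The plan is to emulate Fourier's classical irrationality proof for $e$. Assume toward a contradiction that $P_\chi$ is rational and, invoking the hypothesis, fix a representation $P_\chi=a/b$ with $b>M$. Multiplying both sides by $b!$ gives
\[
b!\,P_\chi \;=\; (b-1)!\,a \;\in\; \mathbb{Z}.
\]

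Next I would split the series at index $b$,
\[
b!\,P_\chi \;=\; \sum_{n=0}^{b}\frac{b!\,\chi(n)}{n!} \;+\; S,\qquad S \;:=\; \sum_{n=b+1}^{\infty}\frac{b!\,\chi(n)}{n!}.
\]
The head is manifestly an integer, since each $b!/n!$ with $n\le b$ is integral and each $\chi(n)$ is integral. Hence $S$ must be an integer as well.

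I would then bound $S$ from above using $\chi(n)\le M$ together with the geometric comparison
\[
\sum_{n=b+1}^{\infty}\frac{b!}{n!} \;=\; \sum_{k=1}^{\infty}\frac{1}{(b+1)(b+2)\cdots(b+k)} \;\le\; \sum_{k=1}^{\infty}\frac{1}{(b+1)^k} \;=\; \frac{1}{b},
\]
which yields $S \le M/b < 1$ because $b>M$. Combined with $S\ge 0$, the only integer in the interval $[0,1)$ is $0$, so $S=0$. On the other hand, since $\chi(n)$ is taken to be a positive integer, every summand of $S$ is strictly positive, so $S>0$; contradiction.

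The main obstacle is justifying the strict lower bound $S>0$: if $\chi(n)$ were allowed to vanish for all $n>b$, the tail could legitimately be zero and $P_\chi$ would collapse to a finite rational sum, derailing the argument. I would therefore read the hypothesis "$\chi(n)$ is a positive integer" as $\chi(n)\ge 1$ and invoke it explicitly at this step. A lesser subtlety is the clean application of the denominator hypothesis: one only needs a \emph{single} rational representation with $b>M$ to run the contradiction, so the hypothesis is used exactly once, at the outset, to secure such a $b$.
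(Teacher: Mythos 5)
Your proposal is correct and follows essentially the same route as the paper: multiply by $b!$, split the series at $n=b$, observe the head is an integer, and bound the tail by $M/b<1$ using the geometric comparison, forcing a contradiction. If anything, you are more careful than the paper at the one delicate step --- justifying that the tail $S$ is strictly positive (which genuinely requires $\chi(n)\ge 1$ infinitely often, a point the paper asserts without comment when it writes $X>0$ and $X\ge 1$).
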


\begin{proof}
To prove the Theorem we first assume that $P_{\chi }$ is rational, i.e. $%
P_{\chi }$ can be written as the division between two integers,%
\begin{equation*}
P_{\chi }=\overset{\infty }{\underset{n=0}{\sum }}\frac{\chi (n)}{n!}=\frac{a%
}{b}.
\end{equation*}%
Now we define a number $X$%
\begin{equation}
X:=b!\left( \overset{\infty }{\sum_{n=0}}\frac{\chi (n)}{n!}-\overset{b}{%
\sum_{n=0}}\frac{\chi (n)}{n!}\right) =b!\overset{\infty }{\sum_{n=b+1}}%
\frac{\chi (n)}{n!}>0,  \label{01}
\end{equation}%
and recall that we assumed that $P_{\chi }$\ is a rational number $a/b,$
therefore we argue that $X$ can also get only non-negative integer numbers
since%
\begin{eqnarray*}
X &=&b!\left( \overset{\infty }{\sum_{n=0}}\frac{\chi (n)}{n!}-\overset{b}{%
\sum_{n=0}}\frac{\chi (n)}{n!}\right) =\frac{ab!}{b}-\overset{b}{\sum_{n=0}}%
\frac{\chi (n)b!}{n!} \\
&=&a(b-1)!-\overset{b}{\sum_{n=0}}\frac{\chi (n)b!}{n!}\geq 1.
\end{eqnarray*}%
Furthermore, the following inequality is well known%
\begin{equation*}
\frac{b!}{n!}<\frac{1}{(1+b)^{n-b}},\text{ }n\geq b+1,
\end{equation*}%
and because $\chi (n)\leq M,\forall n\geq 1$ it is also true that%
\begin{equation}
\frac{b!\chi (n)}{n!}<\frac{M}{(1+b)^{n-b}},\text{ }n\geq b+1.  \label{02}
\end{equation}%
Substituting (\ref{02}) in (\ref{01}) we get the inequality%
\begin{eqnarray*}
X &=&\overset{\infty }{\sum_{n=b+1}}\frac{b!\chi (n)}{n!}<M\overset{\infty }{%
\sum_{n=b+1}}\frac{1}{(1+b)^{n-b}}=M\overset{\infty }{\sum_{n=1}}\frac{1}{%
(1+b)^{n}} \\
&=&M\left( \frac{1}{1-1/\left( 1+b\right) }\right) \frac{1}{b+1}=\frac{M}{b},
\end{eqnarray*}%
therefore for $b>M$ it is clear that%
\begin{equation*}
X<1,
\end{equation*}%
and we get a contradiction since $X\geq 1.$ Therefore $P_{\chi }$ is an
irrational number.
\end{proof}

\begin{example}
Suppose that $P_{\chi }$ takes the form 
\begin{equation*}
P_{\chi }=\frac{3}{0!}+\frac{5}{1!}+\frac{7}{2!}+\frac{3}{3!}+\frac{5}{4!}%
+...\text{ },
\end{equation*}%
where $\chi (n)$ is periodic with the values $3,5,$ and $7$. Since in this
case $b$ should be greater than $7$ (we can simply check it!) then $P_{\chi
} $ is an irrational number.
\end{example}

\begin{example}
\bigskip Any number $P_{\chi }-e$ where $0<\chi (n)\leq M\ \forall n\geq 1,$ 
$P_{\chi }-e>0$ is an irrational number if $P_{\chi }$\ is an irrational
number, proved by the previous Theorem.
\end{example}

\begin{example}
For a periodic sequence $P_{k}$ of prime numbers from $p_{1}=2$ to $%
p_{d}=101,$ we can write the following number%
\begin{equation*}
P_{\chi }=\frac{2}{0!}+\frac{3}{1!}+\frac{5}{2!}+\frac{7}{3!}+\frac{11}{4!}%
+...+\frac{101}{5!}+\frac{97}{6!}+\frac{89}{7!}+...\text{ }.
\end{equation*}%
This periodic sum is an irrational number since it follows the Theorem.
\end{example}

\begin{example}
The following number%
\begin{equation*}
\overset{\infty }{\underset{n=0}{\sum }}\frac{\cos (n\pi )}{n!}+e
\end{equation*}%
is also irrational since $\cos (j\pi ),$ $j=1,2,...,$ can get the values $-1$
or $1$ and therefore in this case $\chi (j)=0,2$ which implies on the
irrationality of this number.
\end{example}

$\ $

\end{document}